\newcommand{\E}{{\mathbb{E}}}
\newtheorem{theorem}{Theorem}
\newtheorem{lemma}{Lemma}
\theoremstyle{remark}
\renewcommand{\S}{{\mathbb{S}}}
\newcommand{\diam}{{\rm diam}\,}
\title{Convex bodies of constant width with exponential illumination number}
\author{A. Arman}
\address{Department of Mathematics, University of Manitoba, Winnipeg, MB, R3T 2N2, Canada}
\email{andrew0arman@gmail.com}
\thanks{The first author was supported by a postdoctoral fellowship of the Pacific Institute of Mathematical Sciences and the Department of Mathematics of the University of Manitoba}
\author{A.\ Bondarenko}
\address{Department of Mathematical Sciences, Norwegian University of Science and Technology, NO-7491 Trondheim, Norway}
\email{andriybond@gmail.com}
\thanks{The second author was supported in part by Grant 334466 of the Research Council of Norway.}
\author{A.\ Prymak}
\address{Department of Mathematics, University of Manitoba, Winnipeg, MB, R3T 2N2, Canada}
\email{prymak@gmail.com}
\thanks{The third author was supported by NSERC of Canada Discovery Grant RGPIN-2020-05357.}
\date{\today}
\keywords{Convex bodies of constant width, illumination number, sphere covering}
\subjclass[2020]{Primary 52C17; Secondary 52A20, 52A40, 52C35}
\begin{document}
	
	\begin{abstract}
		We show that there exist convex bodies of constant width in $\E^n$ with illumination number at least $(\cos(\pi/14)+o(1))^{-n}$, answering a question by G.~Kalai. Furthermore, we prove the existence of finite sets of diameter $1$ in $\E^n$ which cannot be covered by $(2/\sqrt{3}-o(1))^{n}$ balls of diameter $1$, improving a result of J.~Bourgain and J.~Lindenstrauss. 	
	\end{abstract}	
	
	\maketitle
	
	\section{Introduction}
	Let $\E^n$ denote the $n$-dimensional Euclidean space and $\S^{n-1}:=\{x:\|x\|=1\}$ be the unit sphere in $\E^n$. Consider a convex body $K$ in $\E^n$, i.e., a convex compact set with non-empty interior, and a point $x$ from the boundary $\partial K$ of $K$. We say that $x$ is illuminated by a direction $\xi\in\S^{n-1}$ if the half-line $\{x+t\xi:t\ge 0\}$ contains an interior point of $K$. The illumination number $I(K)$ is the minimal number of directions sufficient to illuminate all points $x\in\partial K$. A convex body is said to be of constant width $d$ if the distance between any two distinct parallel supporting hyperplanes of $K$ is $d$.

	{Illumination number of an arbitrary convex body $K\subset\E^n$ of constant width was studied by O.~Schramm~\cite{Schramm}, in particular he proved} 
		$$
	I(K)\le (\sqrt{3/2}+o(1))^{n}.
	$$
	However, it was not known if {there exists a constant width body $K$ with $I(K)\ge (1+\epsilon)^n$} for some $\epsilon>0$, see G.~Kalai's survey~\cite{K}*{Problem~3.3}. Note that in~\cite{K} the question is stated in terms of covering by smaller homothetic copies, but this is equivalent to illumination, for example see~\cite{BK} or~\cite{B}. {We answer Kalai's question in affirmative.}
	\begin{theorem}\label{thm:illum}
		For every positive integer $n$ there exists a convex body $K\subset\E^n$ of constant width satisfying $$I(K)\ge(\cos(\pi/14)+o(1))^{-n}.$$
	\end{theorem}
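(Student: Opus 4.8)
The plan is to reduce the lower bound on $I(K)$ to a counting estimate on the sphere, and then to realise a body of constant width — as a diametric completion of an explicit finite set — for which that estimate is exponentially large.

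\emph{From illumination to normal cones.} Recall that $\xi\in\S^{n-1}$ illuminates $x\in\partial K$ if and only if $\langle\xi,u\rangle<0$ for every outer unit normal $u$ of $K$ at $x$. Hence, if $x$ is \emph{$\theta$-fat}, meaning that the normal cone of $K$ at $x$ contains the spherical cap $\{u\in\S^{n-1}:\langle u,u(x)\rangle\ge\cos\theta\}$ about some axis $u(x)\in\S^{n-1}$, then any $\xi$ illuminating $x$ satisfies $\langle\xi,u(x)\rangle<-\sin\theta$, equivalently $u(x)$ lies in the open cap of angular radius $\tfrac{\pi}{2}-\theta$ centred at $-\xi$. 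Thus a single direction illuminates only those $\theta$-fat points whose axes lie in one such cap, so if $K$ has $\theta$-fat boundary points $x_1,\dots,x_N$ with axes $u_1,\dots,u_N$, then
\[
I(K)\ \ge\ \frac{N}{\displaystyle\max_{v\in\S^{n-1}}\#\{\,i:\langle u_i,v\rangle\ge\sin\theta\,\}}.
\]

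\emph{Construction and count.} I would take $n=2m$, identify $\E^{2m}\cong\mathbb C^m$, fix an odd integer $q$ and $\zeta=e^{2\pi i/q}$, and let $S=\{\,c\,(\zeta^{a_1},\dots,\zeta^{a_m}):(a_1,\dots,a_m)\in\Z_q^m\,\}$, where $c$ is chosen so that $\diam S=1$; then $|S|=q^m$ and $S$ lies on a sphere of radius $r=c\sqrt m=\tfrac{1}{2}\sec(\pi/(2q))$. Let $K=\widehat S$ be a completion of $S$ to a body of constant width $1$. The distance $1$ is realised inside $S$ exactly by the pairs $p,p'$ whose coordinate exponents all differ by $\pm\tfrac{q-1}{2}$; for such a pair $p'\in K\subseteq B(p',1)$, so $p-p'$ is an outer unit normal of $K$ at $p$, and a short computation gives $\langle p-p',p\rangle=\tfrac12$, hence $p-p'$ makes angle $\arccos(1/(2r))=\pi/(2q)$ with the radial direction $u(p):=p/|p|$, while these $2^m$ normals average to a positive multiple of $u(p)$. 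Using the symmetries of $S$ one shows that the cone they span — a fortiori the normal cone of $K$ at $p$ — contains a cap about $u(p)$ of angular radius $\theta_q$ for a suitable $\theta_q\le\pi/(2q)$. Feeding $N=q^m$, $u_i=p_i/|p_i|$ and $\theta=\theta_q$ into the displayed inequality, the remaining maximum is a large-deviations count: writing $\langle p_i,v\rangle=c\sum_j\mathrm{Re}(\zeta^{a_j}\overline{v_j})$ as a sum of independent bounded mean-zero variables and applying a Chernoff bound optimised over $v\in\S^{n-1}$, it is at most $q^m\lambda_q^{\,m}$ with $\lambda_q<1$ explicit. Hence $I(K)\ge\lambda_q^{-m}=(\lambda_q^{-1/2})^{n}$, and optimising the parameters — which should single out $q=7$, so that $\pi/(2q)=\pi/14$ — turns the base into $\cos(\pi/14)^{-1}$ up to the claimed $o(1)$; for odd $n$ one passes to $\E^{n-1}\subset\E^n$, absorbing the loss into the $o(1)$.

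\emph{The main obstacle.} The delicate point is the fat-cone claim: certifying that the implicitly defined completion $\widehat S$ really has normal cones of the asserted angular radius at sufficiently many points of $S$. One must argue, using only the metric data of $S$, that each $p\in S$ stays on $\partial\widehat S$ (which follows from $p$ already having a diametric partner within $S$) and that the $2^m$ normals furnished by the in-$S$ diametric partners of $p$ span a cone containing a genuine spherical cap about $u(p)$, not a thin sliver; this is where the coordinate-permutation and conjugation symmetries of $S$ must be exploited quantitatively. It is precisely the loss here — passing from the stronger estimate one gets for covering $S$ by unit-diameter balls to one valid for \emph{illuminating} $\widehat S$ — together with the ensuing optimisation over the construction, that produces the constant $\cos(\pi/14)$ rather than a larger base; a secondary technical burden is evaluating the rate $\lambda_q$ precisely enough to pin down the optimal $q$ and the limiting value of $\theta_q$.
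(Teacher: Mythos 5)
Your reduction step (a certified normal $u$ at $x$ forces any illuminating $\xi$ to satisfy $\langle\xi,u\rangle<0$, so a fat normal cone confines the illuminating directions to a small cap) is sound and parallels the paper's Lemma~\ref{lemma:illum}, and your final counting inequality is the right shape. The fatal problem is the step you yourself flag as the main obstacle: the fat-cone certification for the roots-of-unity set $S$ is not just unproven, it is false as stated. At a point $p=c(\zeta^{a_1},\dots,\zeta^{a_m})$ the $2^m$ in-$S$ diametric partners give normals which, up to positive scalars, are $v_\eps=(\zeta^{a_j}e^{i\eps_j\pi/(2q)})_j$, $\eps\in\{\pm1\}^m$. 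A positive combination $\sum_\eps c_\eps v_\eps$ has $j$-th coordinate $\zeta^{a_j}(A_je^{i\pi/(2q)}+B_je^{-i\pi/(2q)})$ with $A_j+B_j=\sum_\eps c_\eps$ independent of $j$; hence the positive hull of $\{v_\eps\}$ lies in an $(m+1)$-dimensional subspace of $\E^{2m}$. Being degenerate, it contains no spherical cap about $u(p)$ of any positive radius, so the ``a fortiori'' argument certifies nothing about the normal cone of the completion $\widehat S$ beyond these $2^m$ half-space constraints, and the completion (which is far from unique) cannot be forced to have fat normal cones from the metric data of $S$ alone. One could instead work directly with the polar of this degenerate cone as the set of admissible illuminating directions, but that region is substantially larger than the cap of radius $\tfrac\pi2-\tfrac{\pi}{2q}$ your count presupposes, so the quantitative scheme (and the claimed emergence of $\cos(\pi/14)$ from $q=7$) does not survive; note also that your bound $\theta_q\le\pi/(2q)$ points the wrong way --- you need a lower bound on $\theta_q$ approaching $\pi/14$.

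This is exactly the difficulty the paper engineers away: instead of finitely many diametric partners per special point, it places into the set the entire ring $Q(x,\alpha)=\{x\}\cup\{y\in\S^{n-1}:\|x-y\|=2\cos\alpha\}$, an $(n-2)$-dimensional sphere of partners, so that the diameter condition alone (Lemma~\ref{lemma:illum}) confines every direction illuminating $x$ to the cap $C(-x,\tfrac\pi2-\alpha)$ --- no control of the completion's normal cone is needed. The special points $x$ are then chosen by a probabilistic deletion argument (Lemma~\ref{lemma:random}, following B\"or\"oczky--Wintsche) so that the rings are compatible with diameter $2\cos\alpha$ (this forces the separation constraints whose optimization yields $\alpha=\pi/14$) while no cap of radius $\tfrac\pi2-\alpha$ captures more than $O(n\log n)$ of them. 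If you want to salvage an explicit algebraic construction, you would have to either enrich $S$ analogously (whole rings of partners per point) or replace the fat-cone step by a direct estimate of the polar cone of your $2^m$ normals, redoing the optimization from scratch; as written, the proposal has a genuine gap at its central step.
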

	To explain the main idea of the proof we need a few definitions. 
	For non-zero $x,y\in\E^n$, we use the notation $\theta(x,y):=\arccos(\tfrac{x\cdot y}{\|x\|\|y\|})$ for the angle between the directions of $x$ and $y$. In particular, if $x,y\in\S^{n-1}$, then $\theta(x,y)$ is the geodesic distance between $x$ and $y$ {on $\mathbb{S}^{n-1}$}. For $x\in\S^{n-1}$ and $0<\alpha<\pi$, the spherical cap of $\S^{n-1}$ centered at $x$ of (angular) radius $\alpha$ is $$C(x,\alpha):=\{y\in\S^{n-1}:\theta(x,y)\le \alpha\}.$$ Also, for fixed $x\in\S^{n-1}$ and $0<\alpha\le\pi/6$  define 
	$$
	Q(x,\alpha):=\{x\}\cup \{y\in \S^{n-1}:\|x-y\|=2\cos\alpha\}.
	$$
	The convex hull of $Q(x,\alpha)$ is the right circular cone with angle $\alpha$, apex $x$, {axis containing the origin, and} which is inscribed into $\S^{n-1}$.	Clearly, $\diam{Q(x,\alpha)}=2\cos\alpha$.
	Our main geometric observation is 
	\begin{lemma}\label{lemma:illum}
		Suppose $0<\alpha\le\pi/6$, $K$ is a convex body in $\E^n$ such that $\diam{K}=2\cos\alpha$ and for some $x\in\S^{n-1}$ we have $Q(x,\alpha)\subset K$. Then $x\in\partial K$ and any direction  $\xi\in\S^{d-1}$ illuminating $x$ satisfies $\xi\in C(-x,\tfrac\pi2-\alpha)$.
	\end{lemma}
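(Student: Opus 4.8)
My plan has three ingredients: record the elementary description of $Q(x,\alpha)$, dispatch the claim $x\in\partial K$ by a one-line "push away" argument, and then prove the illumination statement by contradiction using only the constraint $\diam K=2\cos\alpha$. For the setup, I would use $\|u-v\|^2=2-2(u\cdot v)$ for $u,v\in\S^{n-1}$ to rewrite $Q(x,\alpha)\setminus\{x\}=\{y\in\S^{n-1}:x\cdot y=-\cos2\alpha\}$, the $(n-2)$-dimensional base sphere of the cone, whose points are exactly the $y=-\cos2\alpha\,x+\sin2\alpha\,u$ with $u\perp x$, $\|u\|=1$ (I take $n\ge2$, so this set is nonempty; $n=1$ is degenerate). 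Throughout I use $0<\alpha\le\pi/6$ only through $\cos\alpha>0$ and the identities $1+\cos2\alpha=2\cos^2\alpha$, $\sin2\alpha=2\sin\alpha\cos\alpha$, and I note that $\xi\in C(-x,\tfrac\pi2-\alpha)$ says exactly $x\cdot\xi\le-\sin\alpha$.

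For $x\in\partial K$: since $x\in Q(x,\alpha)\subset K$ we have $x\in K$, and if $x\in\intr K$ I take any base point $y$ together with a small ball around $x$ contained in $K$; the point $z:=x+\eps(x-y)/\|x-y\|$ then lies in $K$ for small $\eps>0$, while $z-y$ is a positive multiple of $x-y$, so $\|z-y\|=\|x-y\|+\eps=2\cos\alpha+\eps>\diam K$, a contradiction.

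For the illuminating directions: suppose $\xi\in\S^{n-1}$ illuminates $x$, so $p:=x+t_0\xi\in\intr K$ for some $t_0\ge0$, and $t_0>0$ because $x\notin\intr K$. Assume for contradiction that $x\cdot\xi>-\sin\alpha$. The plan is to exhibit a base point $y\in Q(x,\alpha)$ with $\xi\cdot(x-y)>0$, because then, using $p-y=(x-y)+t_0\xi$ and $\|x-y\|=2\cos\alpha$,
\[
\|p-y\|^2=\|x-y\|^2+2t_0\,\xi\cdot(x-y)+t_0^2>4\cos^2\alpha,
\]
contradicting $p,y\in K$ and $\diam K=2\cos\alpha$. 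To find such a $y$, write $\xi=(x\cdot\xi)x+w$ with $w\perp x$; over base points one has $\xi\cdot y=-\cos2\alpha\,(x\cdot\xi)+\sin2\alpha\,(w\cdot u)$, so (when $\xi\ne\pm x$) the choice $u=-w/\|w\|$ gives $\min_y\xi\cdot y=-\cos2\alpha\,(x\cdot\xi)-\sin2\alpha\sqrt{1-(x\cdot\xi)^2}$, and the desired $y$ exists exactly when $x\cdot\xi$ exceeds this quantity. Dividing that inequality by $2\cos\alpha>0$ and using the half-angle identities, it becomes $c\cos\alpha>-\sin\alpha\sqrt{1-c^2}$ with $c:=x\cdot\xi$; this is immediate for $c\ge0$ (left side $\ge0>$ right side), and for $c<0$ it squares to $c^2<\sin^2\alpha$, so in both cases it follows from $c>-\sin\alpha$. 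The directions $\xi=\pm x$ are trivial: for $\xi=x$ every base point works, and $\xi=-x$ cannot occur since then $c=-1\not>-\sin\alpha$.

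I do not expect a genuine obstacle here; the only place requiring care is the final trigonometric reduction, specifically the sign bookkeeping when $c=x\cdot\xi$ is negative (squaring the inequality is legitimate only because both sides are then negative), together with a small check that the degenerate directions $\xi=\pm x$ and the low-dimensional cases cause no trouble.
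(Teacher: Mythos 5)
Your proof is correct and takes essentially the same route as the paper: both argue by contradiction by exhibiting a point $y\in Q(x,\alpha)$ with $\xi\cdot(x-y)>0$ (your minimizing choice $u=-w/\|w\|$ yields exactly one of the two points where the plane through $x$, $\xi$ and the origin meets the base sphere, which is the paper's witness) and then observing that points of the open ray $\{x+t\xi:t>0\}$ lie at distance greater than $2\cos\alpha$ from $y$, contradicting $\diam K=2\cos\alpha$. The only difference is presentational: you carry out the obtuse-angle comparison by an explicit coordinate/trigonometric computation, whereas the paper argues synthetically in the two-dimensional section.
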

	In other words, under the hypotheses of the lemma, any direction illuminating $x$ must belong to a certain ``well-controlled'' spherical cap.

	Fixing $\alpha=\frac\pi{14}$, we will choose a finite but ``large'' set
	$X\subset \S^{n-1}$. The points from $X$ will have sufficiently ``separated'' directions which will guarantee that the set
	
\begin{equation*}\label{eqn:Wdef}
		\mathcal{W}=\mathcal{W}(X):=\bigcup_{x\in X} Q(x, \alpha)
\end{equation*}	        
	satisfies $\diam{\mathcal{W}}=2\cos\alpha$. Next we will take an arbitrary convex body $K$ of {constant} width $2\cos\alpha$ that contains $\mathcal{W}$ (such a body always exists, see, e.g., \cite{E}*{Thm.~54, p.~126}). We will show that for a suitable randomly constructed $X$ only $O(n\log n)$ points from $X$ in $K$ can be illuminated simultaneously by a direction $\xi\in\S^{n-1}$. This will immediately imply Theorem 1. 
	
	The choice of $X\subset \S^{n-1}$ 
	will be provided by the following probabilistic lemma. 
		
\begin{lemma}\label{lemma:random}
	Suppose $0<\psi<\phi<\frac{\pi}{2}$ are fixed. Then for every positive 
	integer $n$ there exists $X\subset \mathbb{S}^{n-1}$ with cardinality $|X|\ge\left(\frac{1+o(1)}{\sin \phi}\right)^n$ such that
	\begin{itemize}
		\item[(a)] $\psi\leq \theta(x,y) \leq \pi-\psi$ for all  distinct $x,y\in X$, and
		\item[(b)] every point of~$\mathbb{S}^{n-1}$ is contained in at most $O(n\log n)$ spherical caps $C(x,\phi)$, $x\in X$.
	\end{itemize}
\end{lemma}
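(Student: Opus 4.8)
The plan is to construct $X$ by the probabilistic method with alteration. Let $\sigma$ be the rotation-invariant probability measure on $\S^{n-1}$; by rotation invariance $\sigma(C(x,\beta))$ does not depend on $x\in\S^{n-1}$, and the classical identity
\[
\sigma(C(x,\beta))=\frac{\int_0^\beta\sin^{n-2}t\,dt}{\int_0^\pi\sin^{n-2}t\,dt}
\]
together with the elementary estimates $\int_0^\beta\sin^{n-2}t\,dt\asymp n^{-1}(\sin\beta)^{n-1}$ (uniformly for $\beta$ in compact subsets of $(0,\tfrac{\pi}{2})$) and $\int_0^\pi\sin^{n-2}t\,dt\asymp n^{-1/2}$ yields three facts we shall use: $\sigma(C(x,\beta))^{1/n}\to\sin\beta$ for each fixed $\beta\in(0,\tfrac{\pi}{2})$; $\sigma(C(x,\beta'))\le\kappa(\beta)\,\sigma(C(x,\beta))$ whenever $0<\beta<\beta'<\tfrac{\pi}{2}$ and $\beta'-\beta=O(1/n)$, with $\kappa(\beta)$ a constant; and $\sigma(C(x,\beta))/\sigma(C(x,\gamma))\to0$ exponentially whenever $0<\beta<\gamma<\tfrac{\pi}{2}$ are fixed. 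Let $M$ be the least integer with $M\,\sigma(C(x,\phi))\ge n\log n$, so that $M=\bigl(\tfrac{1+o(1)}{\sin\phi}\bigr)^n$, and let $x_1,\dots,x_M$ be chosen independently and uniformly on $\S^{n-1}$. I will argue that with probability $1-o(1)$ this sample already satisfies~(b) and has only $o(M)$ pairs violating~(a); deleting one point from each such pair then yields the required set.

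For~(b), set $\phi':=\phi+1/n$, so $\sigma(C(x,\phi'))\le\kappa(\phi)\,\sigma(C(x,\phi))$ and therefore $\mu:=M\,\sigma(C(x,\phi'))=O(n\log n)$. For fixed $y\in\S^{n-1}$ the count $Z_y:=\#\{\,i:x_i\in C(y,\phi')\,\}$ is a binomial random variable with mean $\mu$, so a Chernoff bound yields $\pr[\,Z_y\ge K_1 n\log n\,]\le e^{-K_1 n\log n}$ once $K_1$ is a sufficiently large constant depending only on $\phi$. Let $\mathcal N$ be a maximal $(1/n)$-separated subset of $\S^{n-1}$; it is a geodesic $(1/n)$-net, and disjointness of the caps $C(z,1/(2n))$ over $z\in\mathcal N$ gives $|\mathcal N|\le1/\sigma(C(x,1/(2n)))\le(c_0 n)^n=e^{(1+o(1))n\log n}$. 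Taking $K_1>1$ and a union bound, with probability $1-o(1)$ every $y'\in\mathcal N$ has $Z_{y'}\le K_1 n\log n$. On this event, for an arbitrary $y\in\S^{n-1}$ choose $y'\in\mathcal N$ with $\theta(y,y')\le1/n$; then $C(y,\phi)\subseteq C(y',\phi')$, and since $y\in C(x_i,\phi)\iff x_i\in C(y,\phi)$ we conclude $\#\{\,i:y\in C(x_i,\phi)\,\}\le Z_{y'}\le K_1 n\log n$. This establishes~(b) for the sample, and it is preserved under deletion of points.

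For~(a), a fixed pair $x_i,x_j$ violates the condition exactly when $x_j\in C(x_i,\psi)\cup C(-x_i,\psi)$, an event of probability $2\sigma(C(x,\psi))$, so the number of violating pairs has expectation at most $M^2\sigma(C(x,\psi))=M\cdot\bigl(M\,\sigma(C(x,\psi))\bigr)$. By the choice of $M$, $M\,\sigma(C(x,\psi))\le(2n\log n)\cdot\sigma(C(x,\psi))/\sigma(C(x,\phi))$, which tends to $0$ because $0<\psi<\phi<\tfrac{\pi}{2}$; hence the expected number of violating pairs is $o(M)$, and by Markov's inequality it is $o(M)$ with probability $1-o(1)$. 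Intersecting with the event of the previous paragraph, we fix a realization of $x_1,\dots,x_M$ that satisfies~(b) and has $o(M)$ violating pairs, and delete one endpoint from each violating pair. The resulting set $X$ has size $N$ with $(1-o(1))M\le N\le M$, hence $N=\bigl(\tfrac{1+o(1)}{\sin\phi}\bigr)^n$; it satisfies~(a) by construction and still satisfies~(b).

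I expect the main obstacle to be the balancing of scales. As $M\approx\mathrm{poly}(n)\,(\sin\phi)^{-(n-1)}$ while $\sigma(C(x,\phi))\approx\mathrm{poly}(n)\,(\sin\phi)^{n-1}$, the mean $\mu$ in the Chernoff step is necessarily only $\Theta(n\log n)$, so the tail estimate is just $e^{-\Theta(n\log n)}$. The net $\mathcal N$ must therefore have size $e^{(1+o(1))n\log n}$, which forces its scale to be $\Theta(1/n)$: fine enough that passing from $\phi$ to $\phi+1/n$ costs only a bounded factor in cap measure (a coarser net would inflate $\mu$ exponentially and break~(b)), yet coarse enough that the union bound over $\mathcal N$ still goes through after $K_1$ is chosen large. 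The remaining ingredients are routine cap-measure estimates, which merely have to be made uniform in $n$; the hypotheses really bite only in the deletion step, which succeeds precisely because $\psi<\phi$ forces $M\,\sigma(C(x,\psi))\to0$.
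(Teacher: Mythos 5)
Your proposal is correct, and its overall strategy is the same as the paper's: sample roughly $n\log n/\sigma(C(x,\phi))$ independent uniform points, show the bounded covering multiplicity (b) holds with probability $1-o(1)$, bound the expected number of pairs violating (a) by exponential decay of $\sigma(C(x,\psi))/\sigma(C(x,\phi))$ since $\psi<\phi$, and finish by deletion. The one real difference is in how (b) is obtained: the paper simply invokes B\"or\"oczky--Wintsche (Theorem 1.1 and the computation in Case~1 of their proof) for the fact that such a random sample covers every point at most $O(n\log n)$ times, whereas you re-derive this from scratch via a Chernoff bound at the slightly enlarged radius $\phi+1/n$, a maximal $(1/n)$-separated net of size $e^{(1+o(1))n\log n}$, and a union bound. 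Your version buys self-containment (and makes explicit where the $1/n$ net scale is forced, which is a nice point), at the cost of having to verify the uniform cap-measure estimates -- in particular the bound $\sigma(C(x,\phi+1/n))\le\kappa(\phi)\,\sigma(C(x,\phi))$, which is fine here because $\phi<\pi/2$ is fixed so $\phi+1/n$ stays in a compact subset of $(0,\pi/2)$ for large $n$; the paper's citation is shorter but leaves the concentration argument to the reference. Two minor points to tighten: the Markov step should be run at an intermediate threshold (e.g.\ if $\E|B|\le\eps_n M$ with $\eps_n\to0$, bound $\pr[|B|\ge\sqrt{\eps_n}\,M]\le\sqrt{\eps_n}$) to justify ``$o(M)$ violating pairs with probability $1-o(1)$'', and small $n$ should be dismissed at the outset (as the paper does), since the asymptotic statement only constrains large $n$.
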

 	
 	Note {that the} probabilistic measure of each cap $C(x,\phi)$ on $\S^{n-1}$ is $(\sin\phi+o(1))^n$ (see for example~\cite{BW}*{Cor.~3.2(iii)}). Hence the {lower bound on $|X|$} in the lemma is optimal up to the $o(1)$ terms.
 	
	\cref{lemma:random} has the same spirit as P.~Erd{\H o}s and C.A.~Rogers's theorem~\cite{ER} concerning the covering of the space (see also the work~\cite{BW} by K.~B\"{o}r\"{o}czky and G.~Wintsche for the spherical case) and happens to be quite useful for constructing ``spread'' sets of fixed diameter. The lemma also readily implies the following result.
	\begin{theorem}\label{thm:balls}
		For every positive integer $n$ there exists a finite set of diameter $1$ in $\E^n$ which cannot be covered by $$(2/\sqrt{3}-o(1))^{n}$$ balls of diameter $1$.
	\end{theorem}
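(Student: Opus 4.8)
The plan is to feed \cref{lemma:random} a matched pair of parameters, rescale the point set it produces to diameter $1$, and then invoke part (b) to limit how many of its points a single ball of diameter $1$ can cover.

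I would fix a small $\eps>0$, set $\psi:=\pi/3-\eps$ and $\phi:=\pi/2-\psi/2=\pi/3+\eps/2$ (so that $0<\psi<\phi<\pi/2$), and apply \cref{lemma:random} to obtain $X=\{x_1,\dots,x_N\}\subset\S^{n-1}$ with $N=\bigl(\tfrac{1+o(1)}{\sin\phi}\bigr)^n=(\sec(\psi/2)+o(1))^n$ satisfying (a) and (b). Passing from geodesic distances to chords, (a) gives $\|x_i-x_j\|=2\sin(\theta(x_i,x_j)/2)\le 2\sin\tfrac{\pi-\psi}{2}=2\cos(\psi/2)$, so the rescaled set $S:=\tfrac1{2\cos(\psi/2)}X$ has $\diam S\le 1$; this $S$ is the set I would use.

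The key geometric input I would isolate is the following: for $0<r<1$ and any $c\in\E^n$, the set $\{z\in\S^{n-1}:\|z-c\|\le r\}$ is contained in some spherical cap of angular radius $\arcsin r$. Indeed, if this set is nonempty then $c\ne 0$, and writing $c=sy$ with $y\in\S^{n-1}$, $s=\|c\|>0$, any such $z$ satisfies $r^2\ge\|z-sy\|^2=1-2s(z\cdot y)+s^2$, hence $z\cdot y\ge\tfrac{1-r^2}{2s}+\tfrac s2\ge\sqrt{1-r^2}$ and so $\theta(z,y)\le\arccos\sqrt{1-r^2}=\arcsin r$. Since a ball of diameter $1$ covering points of $S$ becomes, after undoing the scaling, a ball of radius $r=\cos(\psi/2)<1$ covering the corresponding points of $X$, and $\arcsin(\cos(\psi/2))=\pi/2-\psi/2=\phi$, all of those points of $X$ lie in a single cap $C(y,\phi)$. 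Because $x_i\in C(y,\phi)$ is equivalent to $y\in C(x_i,\phi)$, part (b) then shows that every ball of diameter $1$ meets $S$ in at most $O(n\log n)$ points.

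Consequently covering $S$ requires at least $N/O(n\log n)=(\sec(\psi/2)+o(1))^n$ balls of diameter $1$. Since $\eps>0$ was arbitrary and $\sec((\pi/3-\eps)/2)\to\sec(\pi/6)=2/\sqrt3$ as $\eps\to0^+$, the usual unwinding of quantifiers (fix $\eps$, then let $n\to\infty$) delivers the bound $(2/\sqrt3+o(1))^n$. I expect the only real subtlety to be bookkeeping the two parameters: the choice $\phi=\pi/2-\psi/2$ is exactly what makes the cap carved out of $\S^{n-1}$ by a diameter-$1$ ball coincide with one of the caps $C(x_i,\phi)$ controlled by \cref{lemma:random}(b), and this, together with the constraint $\psi<\phi$, confines $\psi$ to $(0,\pi/3)$ and so pins the exponential base at $\sec(\pi/6)=2/\sqrt3$.
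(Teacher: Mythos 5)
Your proposal is correct and follows essentially the same route as the paper's own proof: apply \cref{lemma:random} with $\psi,\phi$ near $\pi/3$, bound the diameter via the chord--angle relation, observe that any ball of the relevant radius meets $\S^{n-1}$ inside a single cap $C(x_i,\phi)$, and use property (b) to limit each ball to $O(n\log n)$ points before letting $\eps\to0$. The only differences are cosmetic (you rescale first and pair the parameters as $\phi=\pi/2-\psi/2$, and you spell out the cap-containment fact that the paper asserts without proof), so no further changes are needed.
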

	This improves (note that $2/\sqrt{3}\approx 1.1547$) the result of J.~Bourgain and J.~Lindenstrauss~\cite{BL} who established that one needs at least $1.0645^n$ balls for large $n$. An exponential lower bound of $1.003^n$ was originally obtained by L.~Danzer~\cite{D}.

	Both problems, of illumination of {constant width convex bodies} and of {covering finite sets by balls of the same diameter}, are related to Borsuk's conjecture on partitioning a set into {parts} of smaller diameter. Namely, an upper bound in any of the former two problems implies the same upper bound in the latter. We refer to~\cite{K} for more details. Even in the three-dimensional case, it is not known what is the largest illumination number of constant width bodies. The conjectured value is 4, while the established upper bound is $6$, see~\cite{La}. In other small dimensions this problem was studied in~\cite{Be-Ki} and~\cite{BPR}. For broader context, and for other related questions, an interested reader may consult~\cite{BMP}, in particular, Ch.~3, Sect.~5.9 and Sect.~11.4.
	
	Shortly after the first version of this work was posted on arXiv, A.~Glazyrin~\cite{Gl} made a nice observation that a slight generalization of our construction in which the bases of the cones are chosen from a different sphere than the apexes allows one to improve the base of the exponent in \cref{thm:illum} from $\approx 1.026$ to $\approx 1.047$.
	

\section{Proofs}

We begin with geometric arguments and then proceed to the probabilistic ones concluding with the proofs of the theorems. For geometric lemmas, we define for $x\in \S^{n-1}$ and $\alpha \in (0,\frac{\pi}{6}]$, $R(x,\alpha):=\{y\in \S^{n-1}:\|x-y\|=2\cos\alpha\}=Q(x,\alpha)\setminus\{x\}$.
	

	\begin{proof}[Proof of \cref{lemma:illum}.]
		Clearly $R(x,\alpha)\ne\emptyset$, hence $x\in\partial K$ as otherwise $\diam{K}>2\cos\alpha$.
		
		Assume by contradiction that for some $\xi\in \S^{n-1}\setminus C(-x,\tfrac\pi2-\alpha)$ the point $x$ is illuminated by $\xi$. Consider a two-dimensional plane $H$ containing $x$, $\xi$ and the origin. Note that $H\cap R(x,\alpha)$ consists of the two points $y_1$, $y_2$ on $\S^1$ such that $\theta(y_k-x,-x)=\alpha$, $k=1,2$. On the other hand, as $\xi\in \S^{n-1}\setminus C(-x,\tfrac\pi2-\alpha)$, we have $\theta(\xi,-x)\ge\tfrac\pi2-\alpha$. Therefore, $\theta(y_k-x,\xi)\ge\tfrac\pi2$ for some $k\in\{1,2\}$. 
		
		Suppose $k=1$. 
		The distance from any point of the open half-line $\ell:=\{x+t\xi:t>0\}$ to $y_1$ is larger than $\|y_1-x\|=2\cos\alpha$, see Fig.~1. So $\ell$ contains no points from $K$ which contradicts our assumption. The case $k=2$ 
		is completely similar.
	\end{proof}
\begin{center}
	\psset{xunit=0.7cm,yunit=0.7cm}
\begin{pspicture}[showgrid=false](-6,-3)(6, 6)
	\psset{linewidth=1pt}
	\psline(0,0)(0,3)
	\uput{5pt}[90](0,3){$x$}
	\uput{6pt}[-90](0,0){$0$}
	\qdisk(0,0){2pt}\qdisk(0,3){2pt}
	\psline(-2,-2)(0,3)(2,-2)
	\qdisk(-2,-2){2pt}\qdisk(2,-2){2pt}
	\uput{6pt}[-90](-2,-2){$y_{1}$}
	\uput{6pt}[-90](2,-2){$y_{2}$}
	\uput{11pt}[-90](-0.3,2){$\alpha$}
	\uput{11pt}[-90](0.3,2){$\alpha$}
	\psline[linestyle=dashed](-5,5)(5,1)
	\psline[linewidth=1.5pt,arrows=->](0,3)(2.5,2.5)
	\uput{5pt}[90](2.2,2.6){$\vec{\xi}$}
	\psline[linestyle=dotted](0,3)(5,2)
	\uput{5pt}[0](5,2){$\ell$}
	\psset{linewidth=0.5pt}
	\psarc(0,3){1}{-112}{-68}
	\psline(-0.1,2.75)(-0.35,2.85)(-0.25,3.1)
\end{pspicture}

Figure 1. Section in the plane $H$
\end{center}

\vskip5mm

We next show that some necessary ``separation'' conditions on $X\subset \S^{n-1}$ guarantee that  $\diam{\mathcal{W}(X)}\le 2\cos\alpha$ (recall that $\mathcal{W}(X)=\bigcup_{x\in X} Q(x, \alpha)$).

\begin{lemma}\label{lemma:diam}
	Suppose $0<\alpha\le\pi/6$ and $X\subset \S^{n-1}$.
	\begin{itemize}
		\item[(i)] If $\theta(x,y)\le\pi-2\alpha$ for all $x,y\in X$, then  $\diam{X}\le 2\cos\alpha$.
		\item[(ii)] If $4\alpha\le \theta(x,y)\le\pi-6\alpha$ for all distinct $x,y\in X$, then $\diam{\mathcal{W}(X)}\le 2\cos\alpha$. 
	\end{itemize}
\end{lemma}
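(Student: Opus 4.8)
The plan is to reduce both parts to the elementary relation between Euclidean chord length and geodesic distance on the sphere. For $a,b\in\S^{n-1}$ one has $\|a-b\|=2\sin\tfrac{\theta(a,b)}{2}$, and since $\tfrac{\theta(a,b)}{2}\in[0,\tfrac\pi2]$ the right-hand side is a nondecreasing function of $\theta(a,b)$; hence $\|a-b\|\le 2\cos\alpha=2\sin(\tfrac\pi2-\alpha)$ if and only if $\theta(a,b)\le\pi-2\alpha$. Part (i) is then immediate: its hypothesis is precisely this angular condition for every pair of points of $X$.

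For part (ii) I would first reinterpret $R(x,\alpha)$ geodesically. Expanding $\|x-z\|^2=2-2\cos\theta(x,z)$ and setting it equal to $4\cos^2\alpha=2+2\cos2\alpha$ gives $\cos\theta(x,z)=-\cos2\alpha=\cos(\pi-2\alpha)$, so $\theta(x,z)=\pi-2\alpha$ (the restriction $\alpha\le\tfrac\pi6$ keeps $\pi-2\alpha$ in $[0,\pi]$), equivalently $\theta(z,-x)=2\alpha$. Thus $R(x,\alpha)$ is the geodesic sphere of radius $2\alpha$ about $-x$, and every point of $Q(x,\alpha)$ either equals the apex $x$ or lies within geodesic distance $2\alpha$ of $-x$. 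Now, given $u\in Q(x,\alpha)$ and $v\in Q(y,\alpha)$ with $x,y\in X$, the goal is to show $\theta(u,v)\le\pi-2\alpha$, which by the first step yields $\|u-v\|\le2\cos\alpha$ and hence, taking the supremum, $\diam\mathcal W(X)\le2\cos\alpha$. If $x=y$ this follows from $\diam Q(x,\alpha)=2\cos\alpha$, already observed in the excerpt. If $x\ne y$, I would split into three cases and apply the spherical triangle inequality: when $u=x$ and $v=y$, $\theta(u,v)=\theta(x,y)\le\pi-6\alpha$; when exactly one of $u,v$ equals its apex, say $u=x$, then $\theta(u,v)\le\theta(x,-y)+\theta(-y,v)\le(\pi-\theta(x,y))+2\alpha\le\pi-2\alpha$, using $\theta(x,y)\ge4\alpha$; and when neither equals its apex, $\theta(u,v)\le\theta(u,-x)+\theta(-x,-y)+\theta(-y,v)\le4\alpha+\theta(x,y)\le\pi-2\alpha$, using $\theta(x,y)\le\pi-6\alpha$ and $\theta(-x,-y)=\theta(x,y)$.

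The only point requiring attention is the bookkeeping of this case analysis: one must invoke the correct half of the hypothesis ($\theta(x,y)\ge4\alpha$ in the middle case, $\theta(x,y)\le\pi-6\alpha$ in the last) in each sub-case, and note that $\alpha\le\tfrac\pi6$ is exactly what guarantees $4\alpha\le\pi-2\alpha$, so that all the triangle-inequality estimates land at or below the threshold $\pi-2\alpha$. Beyond this I do not expect any real obstacle.
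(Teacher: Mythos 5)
Your proof is correct and follows essentially the same route as the paper's: both parts reduce to the chord--geodesic relation $\|a-b\|=2\sin\tfrac{\theta(a,b)}{2}$, with $R(x,\alpha)$ identified as the geodesic sphere of radius $2\alpha$ about $-x$ and the spherical triangle inequality applied in the same case analysis (apex--apex, apex--rim, rim--rim) using the two halves of the hypothesis exactly as in the paper. The only cosmetic difference is that you treat the $x=y$ case explicitly by citing $\diam Q(x,\alpha)=2\cos\alpha$ and verify $\theta(u,-x)=2\alpha$ by computation, which the paper leaves implicit.
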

\begin{proof}
	We may switch to the geodesic distance by observing that $\theta(x,y)\le \pi-2\alpha$ for $x,y\in\S^{n-1}$ implies $\|x-y\|=2\sin\frac{\theta(x,y)}{2}\le 2\sin\frac{\pi-2\alpha}{2}=2\cos\alpha$. Thus, \textit{(i)} readily follows.
	
	For \textit{(ii)}, suppose $x,y\in X$, $x\ne y$. 
	Clearly, $\theta(x,y)\le \pi-6\alpha<\pi-2\alpha$. For any $u\in R(y,\alpha)$ we have $\theta(-y,u)=2\alpha$, so $\theta(u,x)\le \theta(u,-y)+\theta(-y,x)= 2\alpha+\pi-\theta(x,y)\le \pi-2\alpha$. Similarly, $\theta(v,y)\le\pi-2\alpha$ for any $v\in R(x,\alpha)$. Finally, suppose $u\in R(y,\alpha)$ and $v\in R(x,\alpha)$. Then $\theta(u,v)\le \theta(u,-y)+\theta(-y,-x)+\theta(v,-x)=4\alpha +\theta(x,y)\le \pi-2\alpha$ as required. 	
\end{proof}

%
%


\begin{proof}[Proof of~\cref{lemma:random}]
	It is enough to consider the case of sufficiently large $n$. 
	
	
	In order to satisfy (b) for a ``large'' set of cap centers, we start our proof by following the lines of the proof of~\cite{BW}*{Thm. 1.1}. 
	If $\Omega(\theta)$ denotes the probabilistic measure of a cap $C(x,\theta)$ on $\S^{n-1}$, let $N=\lceil\frac{8n \log n}{\Omega\left((1-\frac{1}{2n})\phi\right)}\rceil$
	and let the set $Y$ consist of $N$ points on $\mathbb{S}^{n-1}$, chosen independently and according to the uniform distribution on the sphere. Then, with probability $(1-o(1))$, $Y$ satisfies
	\[
	|\{y\in Y:x\in C(y,\phi)\}|\le 400 n\log n \quad\text{for every}\quad x\in\S^{n-1}.
	\] 
	We refer the reader to the proof of~\cite{BW}*{Case~1, p.~241} for full details and exact calculations.
	Note that by~\cite{BW}*{Cor.~3.2(iii)}, we get $\Omega\left((1-\frac{1}{2n})\phi\right)=\left((1+o(1))\sin\phi\right)^n$, and so $N=\left(\frac{1+o(1)}{\sin\phi}\right)^n$.  

	Next, using the alteration (deletion) method, we show that with positive probability there exists a large subset $X\subset Y$ satisfying property~(a). Indeed, consider all possible $N^2=\left(\frac{1+o(1)}{\sin \phi}\right)^{2n}$ pairs of points from $N$. Let 
	$$
	B:=B(Y):=\{(x,y):\theta(x,y)\not\in [\psi,\pi-\psi],\ x,y\in Y, x\ne y\}
	$$
	be the set of pairs of points from $Y$ not satisfying the property~(a). Note that a pair $(x, y)\in Y^2$, $x\ne y$, belongs to $B$ with probability  $p=2\Omega(\psi)=\left((1+o(1))\sin \psi\right)^n$, where we again use~\cite{BW}*{Cor.~3.2(iii)}. Thus the expected number of such ``bad'' pairs is $$\mathbb{E}(|B|)\leq p \cdot N^2=\left(\frac{(1+o(1))\sin \psi}{\sin \phi}\right)^n N.$$ 
	Since $\psi<\phi$ we can conclude that for large $n$, $\mathbb{E}(|B|)\leq \frac{N}{4}$. Now, using Markov's inequality, we deduce that with probability at least $\frac{1}{2}$ the number of pairs in $B$ does not exceed $\frac{N}{2}$. 
	
	To summarize, we showed that the random choice of $Y$ satisfies property~(b) with high probability, and $|B(Y)|<\frac{N}{2}$ with probability at least $1/2$. Selecting such a $Y$, we remove a point from each pair in $B(Y)$ to obtain $X\subset Y$ that has at least $\frac{N}{2}=\left(\frac{1+o(1)}{\sin \phi}\right)^n$ elements and satisfies both~(a) and~(b).
\end{proof}

\begin{proof}[Proof of \cref{thm:illum}.]
	Let $X$ be a collection of points whose existence is guaranteed by \cref{lemma:random} with parameters $\phi=\frac{6\pi}{14}+\epsilon$, $\psi=\frac{6\pi}{14}$, where $\epsilon>0$. By Lemma~\ref{lemma:diam}(ii) with $\alpha=\frac{\pi}{14}$, $\diam (\mathcal{W}(X))= 2\cos\frac{\pi}{14}$ (recall the definition of $\mathcal{W}(X)$). By~\cite{E}*{Thm.~54, p.~126}, there exists a convex body $K$ of constant width $2\cos\frac{\pi}{14}$ which contains $\mathcal{W}(X)$. Clearly, $\diam{K}=2\cos\frac{\pi}{14}$. 
	
	Now, if an $x\in X \subset \partial K$ is illuminated by a direction $\xi$, then $\xi \in C(-x, \phi)$ by Lemma~\ref{lemma:illum}. Note that if property~(b) of \cref{lemma:random} holds for $X$, then it also holds for the symmetric set $-X$. Therefore no direction $\xi$ can belong to more than $O(n\log n)$ caps $C(-x, \phi)$, $x\in X$. So every direction $\xi$ illuminates at most $O(n\log n)$ points from $X\subset \partial K$ and we need at least $\frac{|X|}{O(n\log n)}=\left(\frac{1+o(1)}{\sin(\frac{6\pi}{14}+\epsilon)}\right)^n=(\cos(\tfrac{\pi}{14}-\epsilon)+o(1))^{-n}$ directions to illuminate $\partial K$. Since $\epsilon>0$ can be arbitrarily small, this completes the proof.
\end{proof}

\begin{proof}[Proof of \cref{thm:balls}.]
	Let $X$ be a collection of points whose existence is guaranteed by \cref{lemma:random} with parameters $\phi=\frac{\pi}{3}+\epsilon$, $\psi=\frac{\pi}{3}$, where $\epsilon>0$. By \cref{lemma:diam}(i) with $\alpha=\frac{\pi}{6}$, $\diam{X}\leq 2\cos\frac{\pi}{6}=\sqrt{3}$.
	
	Now, if a ball with center $y$ of diameter $\sqrt{3}$ contains an $x\in X$, then $\frac{y}{\|y\|}\in C(x, \phi)$. Every such ball cannot contain more than $O(n\log n)$ points of $X$, by Lemma~\ref{lemma:random}(b). Therefore we need at least $\frac{|X|}{O(n\log n)}=\left(\frac{1+o(1)}{\sin(\frac{\pi}{3}+\epsilon)}\right)^n$
	balls of diameter $\sqrt{3}$ to cover the set $X$ of diameter at most $\sqrt{3}$. We conclude the proof by taking an appropriate homothetic copy of $X$ and noting that $\epsilon>0$ can be selected arbitrarily small.
\end{proof}

	\begin{bibdiv}
		\begin{biblist}

\bib{Be-Ki}{article}{
	author={Bezdek, K.},
	author={Kiss, Gy.},
	title={On the X-ray number of almost smooth convex bodies and of convex
		bodies of constant width},
	journal={Canad. Math. Bull.},
	volume={52},
	date={2009},
	number={3},
	pages={342--348},
}

\bib{BK}{article}{
	author={Bezdek, K\'{a}roly},
	author={Khan, Muhammad A.},
	title={The geometry of homothetic covering and illumination},
	conference={
		title={Discrete geometry and symmetry},
	},
	book={
		series={Springer Proc. Math. Stat.},
		volume={234},
		publisher={Springer, Cham},
	},
	date={2018},
	pages={1--30},
}

\bib{B}{article}{
	author={Boltyanski, V.},
	title={The problem of illuminating the boundary of a convex body},
	journal={Izv. Mold. Fil. AN SSSR},
volume={76},
date={1960},
pages={77--84}
}

\bib{BPR}{article}{
	author={Bondarenko, Andriy},
	author={Prymak, Andriy},
	author={Radchenko, Danylo},
	title={Spherical coverings and x-raying convex bodies of constant width},
	journal={Canad. Math. Bull.},
	volume={65},
	date={2022},
	number={4},
	pages={860--866},
}

\bib{BW}{article}{
	author={B\"{o}r\"{o}czky, K\'{a}roly, Jr.},
	author={Wintsche, Gergely},
	title={Covering the sphere by equal spherical balls},
	conference={
		title={Discrete and computational geometry},
	},
	book={
		series={Algorithms Combin.},
		volume={25},
		publisher={Springer, Berlin},
	},
	date={2003},
	pages={235--251},
}

\bib{BL}{article}{
	author={Bourgain, Jean},
	author={Lindenstrauss, Joram },
	title={On covering a set in $\mathbb{R}^N$ by balls of the same diameter.},
	conference={
		title={Geometric aspects of functional analysis (1989–90)},
	},
	book={
		series={Lecture Notes in Math.},
		volume={1469},
		publisher={Springer, Berlin},
	},
	date={1991},
	pages={138--144},
}

\bib{BMP}{book}{
	author={Brass, Peter},
	author={Moser, William},
	author={Pach, J\'{a}nos},
	title={Research problems in discrete geometry},
	publisher={Springer, New York},
	date={2005},
	pages={xii+499},
}

\bib{D}{article}{
	author={Danzer, L.},
	title={On the $k$-th diameter in $E^d$ and a problem of Gr\"unbaum},
	book={
		series={Proc. Colloquium on Convexity},
	publisher={Copenhagen},
	},
	date={1965},
	pages={41}
}

\bib{E}{book}{
	author={Eggleston, H. G.},
	title={Convexity},
	series={Cambridge Tracts in Mathematics and Mathematical Physics, No. 47},
	publisher={Cambridge University Press, New York},
	date={1958},
	pages={viii+136},
}

\bib{ER}{article}{
	author={Erd\H{o}s, P.},
	author={Rogers, C. A.},
	title={Covering space with convex bodies},
	journal={Acta Arith.},
	volume={7},
	date={1961/62},
	pages={281--285},
}

\bib{Gl}{article}{
	author={Glazyrin, Alexei},
	title={Note on illuminating constant width bodies},
	eprint={https://arxiv.org/abs/2305.04485},
}

\bib{K}{article}{
	author={Kalai, Gil},
	title={Some old and new problems in combinatorial geometry I: around Borsuk's problem},
	conference={
		title={Surveys in combinatorics 2015},
	},
	book={
		series={London Math. Soc. Lecture Note Ser.},
		volume={424},
		publisher={Cambridge Univ. Press, Cambridge},
	},
	date={2015},
	pages={147--174},
}

\bib{La}{article}{
	author={Lassak, Marek},
	title={Illumination of three-dimensional convex bodies of constant width},
	conference={
		title={Proceedings of the 4th International Congress of Geometry},
		address={Thessaloniki},
		date={1996},
	},
	book={
		publisher={Giachoudis-Giapoulis, Thessaloniki},
	},
	isbn={960-7425-11-1},
	date={1997},
}

\bib{Schramm}{article}{
	author={Schramm, Oded},
	title={Illuminating sets of constant width.},
	journal={Mathematika},
	volume={35},
	date={1988},
	number={2},
	pages={180--189},
}
			
		\end{biblist}
	\end{bibdiv}

\end{document}